\newcommand{\nc}{\newcommand}
\nc{\on}{\operatorname}
\nc{\BR}{\mathbb R}
\nc{\BF}{\mathbb F}
\nc{\BC}{\mathbb C}
\nc{\BQ}{\mathbb Q}
\nc{\BZ}{\mathbb Z}
\nc{\BN}{\mathbb N}
\nc{\ii}{\item}
\nc{\abs}[1]{\left\lvert #1 \right\rvert}
\nc{\tensor}{\otimes}
\nc{\Hom}{\on{Hom}}
\nc{\End}{\on{End}}
\nc{\wt}{\widetilde}
\nc{\can}{\on{can}}
\nc{\id}{\on{Id}}
\nc{\union}{\cup}
\nc{\intersect}{\cap}
\nc{\dunion}{\displaystyle\bigcup}
\nc{\dintersect}{\displaystyle\bigcap}
\nc{\dint}{\displaystyle\int}
\nc{\dsum}{\displaystyle\sum}
\nc{\dprod}{\displaystyle\prod}
\nc{\mf}{\mathfrak}
\nc{\mc}{\mathcal}
\nc{\idp}{\mathfrak{p}}
\nc{\idq}{\mathfrak{q}}
\nc{\ida}{\mathfrak{a}}
\nc{\idb}{\mathfrak{b}}
\nc{\inc}{\subseteq}
\nc{\ninc}{\nsubseteq}
\nc{\seq}[1]{{#1}_1, {#1}_2, \ldots, {#1}_n}
\nc{\seqq}[2]{{#1}_1{#2} {#1}_2{#2} \ldots {#2} {#1}_n}
\nc{\fl}[1]{\left\lfloor #1 \right\rfloor}
\nc{\ce}[1]{\left\lceil #1 \right\rceil}
\nc{\Borel}{\mathfrak{B}}
\nc{\toname}[1]{\overset{#1}{\to}}
\newtheorem{theorem}{Theorem}[section]
\newtheorem{lemma}[theorem]{Lemma}
\newtheorem{proposition}[theorem]{Proposition}
\newtheorem{corollary}[theorem]{Corollary}
\newtheorem*{conjecture}{Conjecture}
\newtheorem*{question}{Question}
\newenvironment{definition}[1][Definition]{\begin{trivlist}
\item[\hskip \labelsep {\bfseries #1}]}{\end{trivlist}}
\nc{\jacobi}[2]{\left(\frac{#1}{#2}\right)}
\begin{document}
\title{Even $(\bar{s}, \bar{t})$-core partitions and self-associate characters of $\tilde{S}_n$.}
\author{Calvin Deng}
\begin{abstract}
    A partition is a $\bar{s}$-core if it is the result of removing all of the $s$-bars from a partition. We extend a method of Olsson and Bessenrodt to determine the number of even partitions that are simultaneously $\bar{s}$-core and $\bar{t}$-core. When $p$ and $q$ are distinct primes, this also determines the number of self-associate characters of $\tilde{S}_n$ that are simultaneosly defect 0 for $p$ and $q$.
\end{abstract}
\maketitle
\section{Introduction}
Navarro and Willems~\cite{navarro} were the first to investigate the question
of when a block was simultaneously a $p$-block and a $q$-block for a group $G$
and odd primes $p$ and $q$. In particular, they conjectured that if a $p$-block
and $q$-block coincided, then that block consists of a single character. While
Bessenrodt disproved this conjecture, Olsson and Stanton~\cite{stanton}
showed that it was true in the case where $G$ was the symmetric group $S_n$.
Olsson and Bessenrodt~\cite{Bessenrodt20063} then showed that Navarro's conjecture was true in the case where $G$ was the spin symmetric group $\tilde{S}_n$.

There has been considerable work enumerating characters with defect 0 in $p$
and $q$, as well as enumerating the subclass of self-associate characters.
Anderson~\cite{anderson} showed that the number of characters of $S_n$ with
defect 0 in $p$ and $q$ is $\frac{1}{p + q}\binom{p + q}{p}$, while Ford, Mai,
and Sze~\cite{ford} showed that the number of self-associate characters of
$S_n$ with defect 0 in $p$ and $q$ is $\binom{\frac{p - 1}{2} + \frac{q -
1}{2}}{\frac{p - 1}{2}}$. On the other hand, Bessenrodt and
Olsson~\cite{Bessenrodt20063} showed that the number of spin characters in $\tilde{S}_n$ with defect 0 for $p$ and $q$ was also $\binom{\frac{p - 1}{2} + \frac{q - 1}{2}}{\frac{p - 1}{2}}$. 
However, they left the problem of enumerating the number of self-associate spin characters of
$\tilde{S}_n$ with defect 0 for $p$ and $q$ unresolved.

Much of this work has concentrated on the partitions that correspond combinatorially to $p$-blocks for $S_n$ and $\tilde{S}_n$.
In the case of $S_n$, they are the $p$-core partitions; in the case of
$\tilde{S}_n$, they are the $\bar{p}$-core partitions. Furthermore, a character
of $S_n$ is self-associate if and only if the corresponding partition is
self-conjugate, while a spin character of $\tilde{S}_n$ is self-associate if and only if the corresponding partition is even.
In this paper, we will enumerate the self-associate spin characters in $\tilde{S}_n$ with defect 0 for $p$ and $q$ by counting the number of even $(\bar{s}, \bar{t})$-cores for relatively prime odd integers $s$ and $t$ greater than 1.
\begin{theorem}
    Suppose $s, t > 1$ are relatively prime odd positive integers.
    \begin{itemize}
        \item If $s, t\equiv 3\pmod{4}$, the number of even $(\bar{s}, \bar{t})$-core partitions is
            \[
                \frac{1}{2}\binom{\frac{s - 1}{2} + \frac{t - 1}{2}}{\frac{s - 1}{2}}.
            \]
        \item Otherwise, the number of even $(\bar{s}, \bar{t})$-core partitions is
            \[
                \frac{1}{2}\left( \binom{\frac{s - 1}{2} + \frac{t -
                1}{2}}{\frac{s - 1}{2}} + {(-1)}^{(s - 1)(t - 1) / 8}
                \left(\frac{s}{t}\right)\binom{\fl{s / 4} + \fl{t / 4}}{\fl{t /
                4}} \right).
            \]
            where $\jacobi{s}{t}$ denotes the Jacobi symbol.
    \end{itemize}
\end{theorem}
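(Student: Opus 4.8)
The plan is to reduce the parity condition to a sign statistic and then evaluate a single signed sum over all $(\bar s,\bar t)$-cores. First I would record the elementary but crucial observation that for a strict partition $\lambda$ with parts $\lambda_1 > \cdots > \lambda_r$ one has $|\lambda| - \ell(\lambda) = \sum_i (\lambda_i - 1) \equiv \#\{i : \lambda_i \text{ even}\} \pmod 2$. Hence $\lambda$ is even exactly when it has an even number of even parts, and writing $\varepsilon(\lambda) = (-1)^{|\lambda|-\ell(\lambda)} = (-1)^{\#\{\text{even parts}\}}$, the number of even $(\bar s,\bar t)$-cores equals $\tfrac12(N + S)$, where $N = \binom{\frac{s-1}{2}+\frac{t-1}{2}}{\frac{s-1}{2}}$ is the total number of $(\bar s,\bar t)$-cores (Bessenrodt--Olsson) and $S = \sum_\lambda \varepsilon(\lambda)$ is the signed count. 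So everything comes down to evaluating $S$, and the two cases of the theorem should correspond to $S=0$ versus $S$ equal to the correction term.

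To compute $S$ I would carry the statistic $\varepsilon$ through the Bessenrodt--Olsson parametrization of $(\bar s,\bar t)$-cores by lattice paths (equivalently, order ideals or bead configurations) in an $\frac{s-1}{2} \times \frac{t-1}{2}$ grid, tracking how the parity of each part is read off from the path. The goal is to exhibit $S$ as a signed sum over these paths in which a reflection of the grid through its center acts as a sign-reversing involution. The paths not fixed by the reflection then cancel in pairs, so $S$ collapses to the signed contribution of the reflection-symmetric paths alone.

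The symmetric paths are determined by "half" of their data and so are counted by a grid of dimensions $\fl{s/4} \times \fl{t/4}$, which produces the factor $\binom{\fl{s/4}+\fl{t/4}}{\fl{t/4}}$; the common sign they carry must then be identified. Here the parity bookkeeping along a symmetric configuration turns into a product of Legendre-type signs, and quadratic reciprocity (via Gauss's lemma) should identify it with $(-1)^{(s-1)(t-1)/8}\jacobi{s}{t}$. The case distinction is forced by the arithmetic of the exponent: $(s-1)(t-1)/8$ is a genuine integer precisely when at least one of $s,t \equiv 1 \pmod 4$, whereas if $s \equiv t \equiv 3 \pmod 4$ it is a half-integer. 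This is exactly the regime in which the grid admits no compatible center of symmetry, the involution is fixed-point-free, and $S = 0$, matching the first case of the theorem.

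The main obstacle will be the last step: pinning down the exact sign attached to the symmetric configurations and showing it equals $(-1)^{(s-1)(t-1)/8}\jacobi{s}{t}$. This requires a careful analysis of how the even-part count distributes over a symmetric path, and the passage from the resulting product of signs to a Jacobi symbol through reciprocity is delicate; extracting the $(-1)^{(s-1)(t-1)/8}$ prefactor and choosing the involution so that exactly the half-size grid survives (with genuinely no fixed points when $s\equiv t\equiv 3\pmod 4$) is where the real work lies. A secondary difficulty is constructing a single sign-reversing involution that respects both the $\bar s$- and the $\bar t$-conditions simultaneously, rather than just one of them.
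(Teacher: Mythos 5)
Your reduction of the problem to a signed sum $S=\sum_\lambda(-1)^{\#\{\text{even parts}\}}$ with answer $\tfrac12(N+S)$, and your instinct that the Jacobi symbol should come out of a Gauss/Eisenstein-type floor-sum identity, both match the paper. But the engine of your computation of $S$ --- a sign-reversing involution given by reflecting the grid through its center --- fails, because that involution is generally \emph{not} sign-reversing. Set $m=\fl{s/2}$, $n=\fl{t/2}$, and color the unit square with upper-right corner $(i,j)$ red when $i+j$ is even (these are the squares holding even entries in the Yin-Yang diagram). The $180^\circ$ rotation carries the region $L_P$ below a path $P$ onto the \emph{complement} of the region below the rotated path $P'$, so when $m+n$ is even (the coloring is rotation-invariant) one gets $\abs{E(L_{P'})}=R-\abs{E(L_P)}$, where $R$ is the total number of red squares; the rotation reverses sign only when $R$ is odd. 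If $s\equiv t\equiv 1\pmod 4$ then $m,n$ are both even and $R=mn/2$ is even, so the rotation \emph{preserves} parity and rotated pairs do not cancel at all. Already in a $2\times 2$ grid everything goes wrong: the signed count is $D=2$, the two centrally symmetric paths are both \emph{odd} (contributing $-2$), and the four non-symmetric paths are all even (contributing $+4$); the answer is not carried by the symmetric paths, and they even have the wrong sign. If $s\equiv t\equiv 3\pmod 4$ the rotation is indeed fixed-point-free, but it is sign-reversing only when $mn\equiv 1\pmod 4$: for $(s,t)=(3,7)$, a $3\times 1$ grid with $R=2$, the rotation pairs even paths with even paths and odd with odd, so your argument yields no information, even though $S=0$ is true. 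In the mixed case the rotation exchanges the two colors, and the sign of $P'$ differs from that of $P$ by a factor $(-1)^{\abs{L_P}+R}$ that varies with $P$, so the pairing is not sign-coherent there either.

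The paper avoids all of this with an elementary recursion rather than an involution: writing $D(x,y)$ for the number of even paths minus odd paths, one has $D(x,y)=D(x,y-1)+(-1)^{c}D(x-1,y)$, where $c$ is the number of red squares in the last column below height $y$, and induction over the four parity classes of $(x,y)$ gives $D$ in closed form (in particular $D=0$ when both coordinates are odd, and $D=(-1)^{mn/2}\binom{a+b}{b}$ when one is even). The Jacobi symbol enters separately and earlier: a core is even exactly when its path has the same parity as the boundary path $P_0$ of the Yin-Yang diagram, so $S=(-1)^{\abs{E(L_{P_0})}}D(n,m)$, and the parity of $\abs{E(L_{P_0})}$ is evaluated by floor-sum manipulations together with Eisenstein's lemma $\jacobi{s}{t}=(-1)^{\sum_i\fl{is/t}}$ --- this is the one part of your plan that survives essentially intact, though in the paper it attaches to $P_0$, not to a ``common sign of symmetric configurations.'' To rescue your outline you would need a genuinely sign-reversing pairing on non-symmetric paths valid in every parity regime; central symmetry is not it.
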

Our proof will use the bijection between spin characters of defect 0 for $p$
and $q$ in $\tilde{S}_n$ and monotone $\left(\frac{p - 1}{2},
\frac{q-1}{2}\right)$ paths. In particular, self-associate spin characters will
correspond to paths of a certain ``parity''.

One immediate consequence is the following corollary:
\begin{corollary}
    Suppose $p$ and $q$ are distinct odd primes. 
    \begin{itemize}
        \item If $p, q\equiv 3\pmod{4}$, the number of self-associate spin characters of $\tilde{S}_n$ which are simultaniously of defect 0 for $p$ and $q$ is 
            \[
                \frac{1}{2}\binom{\frac{p - 1}{2} + \frac{q - 1}{2}}{\frac{p - 1}{2}}.
            \]
        \item Otherwise, the number of self-associate spin characters of $\tilde{S}_n$ which are simultaniously of defect 0 for $p$ and $q$ is 
            \[
                \frac{1}{2}\left( \binom{\frac{p - 1}{2} + \frac{q - 1}{2}}{\frac{p - 1}{2}} + (-1)^{(p - 1)(q - 1) / 8} \left(\frac{p}{q}\right)\binom{\fl{p / 4} + \fl{q / 4}}{\fl{q / 4}} \right)
            \]
    \end{itemize}
\end{corollary}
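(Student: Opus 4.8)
The plan is to deduce the Corollary directly from the Theorem by passing through the representation-theoretic dictionary recalled in the introduction. First I would observe that distinct odd primes $p$ and $q$ are in particular relatively prime odd positive integers greater than $1$, so the hypotheses of the Theorem are satisfied with $s = p$ and $t = q$, and the congruence dichotomy $p, q \equiv 3 \pmod 4$ versus otherwise is exactly the case split appearing there.

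Next I would set up the correspondence precisely. A spin character of $\tilde{S}_n$ has defect $0$ for $p$ exactly when the partition of $n$ labeling it is a $\bar{p}$-core, so a spin character with defect $0$ simultaneously for $p$ and $q$ corresponds to a partition that is both a $\bar{p}$-core and a $\bar{q}$-core, i.e.\ a $(\bar{p}, \bar{q})$-core. Layering on the self-associate criterion recalled in the introduction --- a spin character is self-associate if and only if its labeling partition is even --- yields a bijection between self-associate spin characters of defect $0$ for both $p$ and $q$, ranging over all $n$, and even $(\bar{p}, \bar{q})$-core partitions. Because $\gcd(p,q) = 1$, the set of $(\bar{p}, \bar{q})$-cores is finite, so both sides are finite and this bijection identifies the two counts.

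Finally I would apply the Theorem with $s = p$ and $t = q$: the two displayed formulas transcribe verbatim into the two cases of the Corollary, the second involving $(-1)^{(p-1)(q-1)/8}\left(\frac{p}{q}\right)\binom{\fl{p/4} + \fl{q/4}}{\fl{q/4}}$.

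I expect no genuine obstacle here, since all the substantive content lives in the Theorem and in the cited block-theoretic and character-theoretic correspondences of Bessenrodt--Olsson. The only point requiring a moment's care is confirming that the two enumerations are compatible --- that counting self-associate defect-$0$ spin characters across all the groups $\tilde{S}_n$ matches counting even $(\bar{p}, \bar{q})$-cores as unrestricted partitions --- which is precisely what the finiteness guaranteed by $\gcd(p,q) = 1$ secures.
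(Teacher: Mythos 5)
Your proposal is correct and matches the paper's own (implicit) argument: the paper treats this corollary as an immediate consequence of the main theorem, specializing $s = p$, $t = q$ and invoking the dictionary from Section 2 (spin characters $\leftrightarrow$ bar partitions, defect $0$ for $p$ $\leftrightarrow$ $\bar{p}$-core, self-associate $\leftrightarrow$ even partition), exactly as you do. Your additional remark about finiteness of the set of $(\bar{p},\bar{q})$-cores is a harmless elaboration of the same route, not a different one.
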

In addition, spin characters of $\tilde{S}_n$ split on restriction to $\tilde{A}_n$ if and only if the spin character is self-associate. Thus we also immediately get the number of spin characters of $\tilde{A}_n$ that are of defect 0 for $p$ and $q$.
\begin{corollary}
    Suppose $p$ and $q$ are distinct odd primes.
    \begin{itemize}
        \item If $p, q\equiv 3\pmod{4}$, the number of spin characters of $\tilde{A}_n$ which are simultaniously of defect 0 for $p$ and $q$ is 
            \[
                \frac{3}{2}\binom{\frac{p - 1}{2} + \frac{q - 1}{2}}{\frac{p - 1}{2}}.
            \]
        \item Otherwise, the number of spin characters of $\tilde{A}_n$ which are simultaniously of defect 0 for $p$ and $q$ is 
            \[
                \frac{3}{2}\binom{\frac{p - 1}{2} + \frac{q - 1}{2}}{\frac{p - 1}{2}} + \frac{1}{2}(-1)^{(p - 1)(q - 1) / 8} \left(\frac{p}{q}\right)\binom{\fl{p / 4} + \fl{q / 4}}{\fl{q / 4}}
            \]
    \end{itemize}
\end{corollary}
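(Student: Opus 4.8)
The plan is to deduce the $\tilde A_n$ count from the $\tilde S_n$ count (the main Theorem together with the previous Corollary) by Clifford theory for the index-two normal subgroup $\tilde A_n \triangleleft \tilde S_n$. Let $c$ denote the number of $(\bar p, \bar q)$-core partitions, which by Bessenrodt--Olsson equals $\binom{\frac{p-1}{2}+\frac{q-1}{2}}{\frac{p-1}{2}}$, and let $e$ denote the number of even such cores, the quantity computed in the main Theorem and in the previous Corollary. Writing $o = c - e$ for the number of odd $(\bar p, \bar q)$-cores, I want to show that the number of spin characters of $\tilde A_n$ that are simultaneously of defect $0$ for $p$ and $q$ is $2e + o = c + e$, and then substitute.

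First I would recall the restriction behaviour. The nontrivial linear character of $\tilde S_n / \tilde A_n$ is the sign $\on{sgn}$, and tensoring a spin character with $\on{sgn}$ produces its associate. By Clifford theory a spin character $\chi$ of $\tilde S_n$ restricts irreducibly to $\tilde A_n$ exactly when $\chi \not\cong \chi \tensor \on{sgn}$, in which case $\chi$ and $\chi\tensor\on{sgn}$ have equal restriction, whereas $\chi|_{\tilde A_n}$ splits into two distinct irreducibles exactly when $\chi$ is self-associate. Combined with the two facts quoted in the introduction, namely that self-associate spin characters correspond to even partitions and that a spin character splits on restriction iff it is self-associate, this gives the dictionary: each even $(\bar p, \bar q)$-core contributes a self-associate defect-$0$ character of $\tilde S_n$ that splits into \emph{two} spin characters of $\tilde A_n$, while each odd $(\bar p, \bar q)$-core contributes an associate pair of defect-$0$ characters of $\tilde S_n$ whose common restriction is a \emph{single} spin character of $\tilde A_n$.

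Next I would verify that these restrictions land in, and exhaust, the defect-$0$-for-$p,q$ spin characters of $\tilde A_n$. Since $[\tilde S_n : \tilde A_n] = 2$ is coprime to the odd primes $p$ and $q$, we have $|\tilde A_n|_p = |\tilde S_n|_p$, and a constituent of $\chi|_{\tilde A_n}$ has degree $\chi(1)$ or $\chi(1)/2$; in either case the full $p$-part (resp.\ $q$-part) still divides the degree, so every constituent produced above is again of defect $0$ for both $p$ and $q$. Conversely, by Frobenius reciprocity every spin character of $\tilde A_n$ lies under some spin character of $\tilde S_n$, and a defect-$0$ one lies under a defect-$0$ character by the same degree comparison, so nothing is missed. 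The constituents coming from even cores and those from odd cores lie under different $\tilde S_n$-characters and hence are distinct, so there is no double counting; the total is therefore $2e + o$.

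Finally I would assemble the count. Using $o = c - e$ gives $2e + o = c + e$. Substituting $c = \binom{\frac{p-1}{2}+\frac{q-1}{2}}{\frac{p-1}{2}}$ and the value of $e$ from the main Theorem yields the two cases: when $p, q \equiv 3 \pmod 4$ we have $e = \tfrac12 c$, so $c + e = \tfrac32 c$; otherwise $e = \tfrac12\big(c + (-1)^{(p-1)(q-1)/8}\jacobi{p}{q}\binom{\fl{p/4}+\fl{q/4}}{\fl{q/4}}\big)$, so $c + e = \tfrac32 c + \tfrac12(-1)^{(p-1)(q-1)/8}\jacobi{p}{q}\binom{\fl{p/4}+\fl{q/4}}{\fl{q/4}}$, matching the two displayed formulas. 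The main obstacle is not the arithmetic but the bookkeeping of the two middle paragraphs: one must correctly track that it is the self-associate (even) characters that \emph{double} on restriction while the associate pairs (odd cores) \emph{collapse} to one, and one must justify that the defect-$0$-for-$p,q$ property is both preserved and exhausted under restriction, which is precisely where the coprimality of the index to $p$ and $q$ enters.
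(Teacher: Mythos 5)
Your proposal is correct and follows essentially the same route as the paper: the paper likewise obtains the count as (number of defect-$0$ spin characters of $\tilde S_n$, i.e.\ the Bessenrodt--Olsson binomial $c$) plus (number of those that split on restriction, i.e.\ the even-core count $e$ from the preceding corollary), which is exactly your $2e + o = c + e$. The only difference is that you spell out the Clifford-theoretic bookkeeping (self-associate characters splitting in two, associate pairs collapsing, and preservation of defect $0$ via coprimality of the index) that the paper treats as immediate.
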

\begin{proof}
    By the result of Bessenrodt and Olsson, there are $\binom{\frac{p - 1}{2} +
    \frac{q - 1}{2}}{\frac{p - 1}{2}}$ spin characters of $\tilde{S}_n$ that
    are defect 0 for $p$ and $q$. The corollary then follows immediately from
    the fact that the number of spin characters of $\tilde{A}_n$ (with defect 0
    for $p$ and $q$) is equal to the number of spin characters of $\tilde{S}_n$
    (with defect 0 for $p$ and $q$) plus the number of spin characters of
    $\tilde{S}_n$ (with defect 0 for $p$ and $q$) that split upon restriction to $\tilde{A}_n$.
\end{proof}
\section{Preliminaries}
Recall the definition of an $\bar{s}$-core partition and a $(\bar{s},\bar{t})$-core partition.
\begin{definition}
    A bar partition $\lambda = (\lambda_1, \lambda_2, \ldots, \lambda_k)$ is a partition satisfying $\lambda_1 > \cdots > \lambda_k$.
\end{definition}
Note that since the constituents of $\lambda$ are distinct, we can talk about positive integers $a$ either being in $\lambda$ or not being in $\lambda$, much like a set.

The definition of a $\bar{s}$-core partition is a bit involved, and for the technical details, see~\cite{hoffman}. However, we will use the following (equivalent) combinatorial specification.
\begin{proposition}
    A bar partition $\lambda$ is a $\bar{s}$-core partition if and only if the following three constraints hold:
    \begin{itemize}
        \item
            No part in $\lambda$ is divisible by $s$.
        \item
            For all $a$ in $\lambda$ with $a > s$, $a - s\in\lambda$.
        \item
            For all $a$ in $\lambda$ with $1\le a\le s - 1$, $s - a\notin\lambda$.
    \end{itemize}
\end{proposition}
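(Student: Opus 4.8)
The plan is to work directly from the definition of $\bar{s}$-bar removal given in \cite{hoffman} and to verify that the three listed conditions are exactly what is needed to block every removable bar. Recall that $\lambda$ is a $\bar{s}$-core precisely when no $\bar{s}$-bar can be removed from it, and that a removable $\bar{s}$-bar corresponds to one of three elementary operations on the set of parts $X = \{\lambda_1, \ldots, \lambda_k\}$: (i) deleting a part equal to $s$; (ii) replacing a part $a > s$ by $a - s$, which is permitted only when $a - s \notin X$ so that the result is again a strict partition; and (iii) deleting two parts $a, b$ with $a + b = s$. It therefore suffices to prove that the three bulleted conditions hold simultaneously if and only if none of (i), (ii), (iii) is available.

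First I would dispatch the direction in which the conditions imply that no bar is removable. If the first condition holds then $s$ itself is not a part, so (i) is blocked. If $a > s$ is a part, the second condition gives $a - s \in X$, so the side condition $a - s \notin X$ demanded by (ii) fails and (ii) is blocked. Finally, if two parts satisfied $a + b = s$, writing $a$ for the smaller we have $1 \le a \le s - 1$ and $b = s - a$, so the third condition forces $s - a \notin X$, contradicting that $b$ is a part; hence (iii) is blocked. Since $s$ is odd the two parts in (iii) are automatically distinct, so no degenerate $a = s - a$ case arises.

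Conversely, suppose no bar is removable. Conditions two and three are then just the contrapositives of the unavailability of (ii) and (iii): if some part $a > s$ had $a - s \notin X$ then move (ii) would apply, and if some part $a$ with $1 \le a \le s-1$ had $s - a \in X$ then $a$ and $s - a$ would be distinct parts summing to $s$ and move (iii) would apply. The only condition requiring real work is the first, because the definition's move (i) forbids only the single part $s$, whereas we must rule out \emph{every} multiple of $s$. Here I would argue by descent: if some positive multiple of $s$ were a part, choose the least such, say $ms$. If $m \ge 2$ then $ms > s$, and because (ii) is unavailable we must have $ms - s = (m-1)s \in X$, a smaller multiple of $s$ lying in $X$ and contradicting minimality; hence $m = 1$, so $s \in X$, which makes (i) available, a contradiction. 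Therefore no part is divisible by $s$.

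The only genuine obstacle is the very first step, namely being certain that operations (i)--(iii) really do exhaust the removable $\bar{s}$-bars; this is precisely the content of the definition in \cite{hoffman} (in abacus terms, the steps along runner $0$ give (i) and the within-runner case of (ii), the remaining within-runner slides give (ii), and the interaction of the linked runners $r$ and $s - r$ gives (iii)), and it is what justifies calling the proposition an equivalent combinatorial specification. Granting that dictionary, everything else is the routine case analysis above together with the short minimal-counterexample descent used for the divisibility condition.
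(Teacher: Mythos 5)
Your proof is correct, but note that the paper offers no proof of this proposition at all: it states the three conditions as an ``(equivalent) combinatorial specification'' and defers all technical details to \cite{hoffman}, so the comparison here is between your argument and a bare citation. Granting the dictionary between removable $\bar{s}$-bars and your moves (i)--(iii) --- which is exactly the definitional content you correctly flag as imported from \cite{hoffman} --- your case analysis is sound in both directions: blocking (i)--(iii) is immediately equivalent to the conjunction of $s \notin \lambda$, the second bulleted condition, and the third bulleted condition (with oddness of $s$ correctly invoked to exclude the degenerate pair $a = s - a$), and your minimal-counterexample descent is the right way to upgrade $s \notin \lambda$ to the stronger first condition that no part is divisible by $s$; conversely, the first condition trivially yields $s \notin \lambda$, so the two formulations coincide. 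What your route buys is a self-contained verification that the three bullets really are equivalent to the absence of any removable $s$-bar, rather than an appeal to authority; what it does not (and cannot, without reproducing the development in \cite{hoffman}) supply is the bar--move dictionary itself, but that is the same ingredient the paper leaves to its reference, so your argument is as complete as one could reasonably make it at the level of the paper's exposition.
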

\begin{definition}
    A partition $\lambda$ is a \textit{$(\bar{s},\bar{t})$-core partition} if it is both a $\bar{s}$-core partitions and a $\bar{t}$-core partition.
\end{definition}
\begin{definition}
    A partition $\lambda$ is said to be \textit{even} if an even number of its parts are even.
\end{definition}
Note that a partition $\lambda$ being even is equivalent to $\abs{\lambda} - \ell_\lambda$ being even, where $\abs{\lambda}$ and $\ell_\lambda$ denote the size and length of $\lambda$, respectively.
\begin{proposition}
    Associate classes of spin characters of $\tilde{S}_n$ correspond to bar partitions of $n$. In addition, 
    \begin{itemize}
        \item Associate classes classes with defect 0 for $p$ correpond to $\bar{p}$-core partitions.
        \item Self-associate spin characters of $\tilde{S}_n$ correspond to even partitions. 
    \end{itemize}
\end{proposition}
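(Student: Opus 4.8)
The plan is to assemble the proposition from three classical ingredients in the projective representation theory of $S_n$, recalling (and, where a short argument is available, reproving) rather than deriving everything from scratch. The starting point is Schur's realization of the spin characters as the irreducible characters of $\tilde{S}_n$ on which the central involution $z$ acts by $-1$; through the Clifford-algebra decomposition of the twisted group algebra one obtains both a degree formula and the parametrization by bar partitions. I would take as given, citing \cite{hoffman}, the bijection between these negative irreducible characters (grouped into associate classes) and bar partitions $\lambda \vdash n$, and spend the argument instead on the two bulleted refinements: identifying which $\lambda$ yield a self-associate character, and translating the defect-$0$ condition into the combinatorial $\bar{p}$-core constraints of the earlier Proposition.

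For the associate classes, the relevant involution is twisting by the sign character $\varepsilon$ of $\tilde{S}_n$ inflated from $S_n$, so that the associate of a spin character $\langle\lambda\rangle$ is $\langle\lambda\rangle\otimes\varepsilon$. To decide when this fixes $\langle\lambda\rangle$, I would compare degrees through the factor $2^{\fl{(n-\ell_\lambda)/2}}$ in Schur's formula $g_\lambda = 2^{\fl{(n-\ell_\lambda)/2}}\tfrac{n!}{\lambda_1!\cdots\lambda_{\ell_\lambda}!}\dprod_{i<j}\frac{\lambda_i-\lambda_j}{\lambda_i+\lambda_j}$, or evaluate $\langle\lambda\rangle$ on the preimage of a suitable odd class. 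The dichotomy is that $\langle\lambda\rangle\otimes\varepsilon\cong\langle\lambda\rangle$ exactly when $n-\ell_\lambda$ is even and splits off a genuinely distinct associate when $n-\ell_\lambda$ is odd; this is precisely the point where the associated Clifford algebra carries one versus two simple modules of the relevant parity. By the remark following the definition of ``even'', the condition $n - \ell_\lambda$ even is exactly that $\lambda$ have an even number of even parts, which gives the second bullet.

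For the defect-$0$ statement I would invoke the spin block theory of $\tilde{S}_n$ for odd $p$ (Morris's conjecture, established by Humphreys; see the framework in \cite{Bessenrodt20063}): two spin characters lie in the same $p$-block if and only if their labels share a $\bar{p}$-core, and the defect of the block with $\bar{p}$-core $\mu$ and $\bar{p}$-weight $w$, where $\abs{\lambda} = \abs{\mu} + pw$, is governed by $w$. Such a block therefore has defect $0$ exactly when $w = 0$, that is, when no $p$-bar can be removed from $\lambda$, which is precisely the condition that $\lambda$ be a $\bar{p}$-core; unwinding the bar-removal recursion on the $\bar{p}$-abacus recovers the three bulleted constraints of the combinatorial Proposition. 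Alternatively, one argues directly that $v_p(g_\lambda)$ attains the full value $v_p(\abs{\tilde{S}_n}) = v_p(n!)$ exactly for $\bar{p}$-cores, by estimating the $p$-adic valuation of the product formula above.

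The main obstacle I anticipate is the self-associate criterion, since it is the one place where a genuine argument is needed rather than a bibliographic appeal: keeping the parity bookkeeping of the Clifford-module count consistent with the paper's ``even'' convention requires care, and a sign error there inverts the correspondence. The defect-$0$ translation, by contrast, amounts to a dictionary between the arithmetic statement ($g_\lambda$ divisible by $\abs{\tilde{S}_n}_p$) and the combinatorial $\bar{p}$-core condition already characterized in the earlier Proposition, while the underlying parametrization of associate classes by bar partitions I would simply cite.
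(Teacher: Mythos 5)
Your proposal is correct in substance, but note that it does strictly more than the paper: the paper's entire ``proof'' of this proposition is the single line ``see Hoffman, \S 10,'' treating the bar-partition parametrization, the defect-zero dictionary, and the self-associate dictionary all as classical background. Your sketches of the two bullets are the standard arguments and they are sound: self-associateness of $\langle\lambda\rangle$ is equivalent to $n - \ell_\lambda$ being even, which by the paper's remark after its definition is exactly the paper's notion of an even partition, and the mechanism is indeed whether the relevant Clifford algebra contributes one or two simple modules; defect zero for odd $p$ is equivalent to $\bar{p}$-weight zero, i.e.\ to $\lambda$ being a $\bar{p}$-core, either via the Morris--Humphreys classification of spin blocks or via the $p$-adic valuation of the bar-length form of Schur's degree formula. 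What your route buys is an actual argument where the paper offers only a citation; what the paper's choice buys is brevity, which is defensible since the proposition is pure background recalled to set up the combinatorics.

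One caveat on your self-associate paragraph: the first option you list, ``compare degrees through the factor $2^{\fl{(n-\ell_\lambda)/2}}$,'' cannot decide the question, because $\langle\lambda\rangle$ and $\langle\lambda\rangle\otimes\varepsilon$ are irreducible characters of the same degree, so no degree comparison distinguishes them. What that exponent actually records is the rank parity of the underlying Clifford algebra, which is your (correct) fallback: one simple module of the relevant type when $n - \ell_\lambda$ is even, two when it is odd. Your other alternative is also the right one, since $\langle\lambda\rangle\otimes\varepsilon = \langle\lambda\rangle$ holds iff $\langle\lambda\rangle$ vanishes on all elements lying over odd permutations, which Schur's character values confirm precisely in the even-type case. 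So the proposal stands, provided you drop or reword the degree-comparison clause.
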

    For a proof (and more background on the theory of characters of
    $\wt{S}_n$), see Hoffman, \S 10.~\cite{hoffman}
\section{Yin-Yang Diagrams}
From now on we assume that $s, t > 1$ are relatively prime odd integers. Set $m = \fl{s/2}, n = \fl{t/2}, a = \fl{s / 4}, b = \fl{t / 4}$.

The Yin-Yang diagram of $(s, t)$ can be represented as an $m\times n$ grid of
integers. If the lower left corner of the grid is $(0,0)$ and upper right
corner is $(n, m)$, then for each $1\le x\le n$ and $1 \le y \le m$, we place
the value $\abs{sx - ty}$ in the square whose upper right corner is $(x,y)$.
The Yin half of the diagram corresponds to the region of the Yin-Yang diagram
corresponding to ordered pairs $(x,y)$ where $sx - ty < 0$; the Yang half corresponds to the portion of the diagram where $sx - ty > 0$.

\begin{figure}
    \label{yinyang}
    \centering
    \begin{tabular}{|>{\columncolor[HTML]{000000}}l |lllllll}
        \cline{1-7}
        {\color[HTML]{FFFFFF} 89} & \multicolumn{1}{l|}{\cellcolor[HTML]{000000}{\color[HTML]{FFFFFF} 76}} & \multicolumn{1}{l|}{\cellcolor[HTML]{000000}{\color[HTML]{FFFFFF} 63}} & \multicolumn{1}{l|}{\cellcolor[HTML]{000000}{\color[HTML]{FFFFFF} 50}} & \multicolumn{1}{l|}{\cellcolor[HTML]{000000}{\color[HTML]{FFFFFF} 37}} & \multicolumn{1}{l|}{\cellcolor[HTML]{000000}{\color[HTML]{FFFFFF} 24}} & \multicolumn{1}{l|}{\cellcolor[HTML]{000000}{\color[HTML]{FFFFFF} 11}} & 2  \\ \cline{1-7}
        {\color[HTML]{FFFFFF} 72} & \multicolumn{1}{l|}{\cellcolor[HTML]{000000}{\color[HTML]{FFFFFF} 59}} & \multicolumn{1}{l|}{\cellcolor[HTML]{000000}{\color[HTML]{FFFFFF} 46}} & \multicolumn{1}{l|}{\cellcolor[HTML]{000000}{\color[HTML]{FFFFFF} 33}} & \multicolumn{1}{l|}{\cellcolor[HTML]{000000}{\color[HTML]{FFFFFF} 20}} & \multicolumn{1}{l|}{\cellcolor[HTML]{000000}{\color[HTML]{FFFFFF} 7}}  & 6                                                                      & 19 \\ \cline{1-6}
        {\color[HTML]{FFFFFF} 55} & \multicolumn{1}{l|}{\cellcolor[HTML]{000000}{\color[HTML]{FFFFFF} 42}} & \multicolumn{1}{l|}{\cellcolor[HTML]{000000}{\color[HTML]{FFFFFF} 29}} & \multicolumn{1}{l|}{\cellcolor[HTML]{000000}{\color[HTML]{FFFFFF} 16}} & \multicolumn{1}{l|}{\cellcolor[HTML]{000000}{\color[HTML]{FFFFFF} 3}}  & 10                                                                     & 23                                                                     & 36 \\ \cline{1-5}
        {\color[HTML]{FFFFFF} 38} & \multicolumn{1}{l|}{\cellcolor[HTML]{000000}{\color[HTML]{FFFFFF} 25}} & \multicolumn{1}{l|}{\cellcolor[HTML]{000000}{\color[HTML]{FFFFFF} 12}} & 1                                                                      & 14                                                                     & 27                                                                     & 40                                                                     & 53 \\ \cline{1-3}
        {\color[HTML]{FFFFFF} 21} & \multicolumn{1}{l|}{\cellcolor[HTML]{000000}{\color[HTML]{FFFFFF} 8}}  & 5                                                                      & 18                                                                     & 31                                                                     & 44                                                                     & 57                                                                     & 70 \\ \cline{1-2}
        {\color[HTML]{FFFFFF} 4}  & 9                                                                      & 22                                                                     & 35                                                                     & 48                                                                     & 61                                                                     & 74                                                                     & 87 \\ \cline{1-1}
    \end{tabular}
    \caption{(13, 17) Yin-Yang Diagram}
\end{figure}
For a path $P$ from $(0,0)$ to $(n, m)$, let $L_P$ be the region of the
Yin-Yang diagram bounded above by $P$. Then there is a map from paths to bar partitions, given by
\begin{equation}
    \label{eq:pathtocore}
    P\mapsto L_P\ \Delta\ L_{P_0},
\end{equation}
where $P_0$ denotes the path from $(0, 0)$ to $(n, m)$ separating the Yin and Yang
regions and $\Delta(S, T) = (S\union T) \backslash (S\intersect T)$ denotes the symmetric difference of $S$ and $T$.
(Strictly speaking, this gives a map into sets, but a bar partition $\lambda$ can be represented uniquely as a set because all of its parts are distinct.)
\begin{lemma}
    The map given in $\eqref{eq:pathtocore}$ maps into $(\bar{s}, \bar{t})$-core paritions, and gives a bijection between $(\bar{s}, \bar{t})$-cores and monotonic paths in the Yin-Yang diagram.
\end{lemma}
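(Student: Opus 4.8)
The plan is to read the set $L_P\,\Delta\,L_{P_0}$ both by columns and by rows, and to observe that the monotonicity of the two paths forces exactly the three abacus conditions of the combinatorial criterion for $\bar s$- and $\bar t$-cores recalled above. First I would record the basic arithmetic of the diagram. Since $\gcd(s,t)=1$ and $1\le x\le n<t$, $1\le y\le m<s$, the value $\abs{sx-ty}$ is never divisible by $s$ or by $t$ (mod $s$ it is $\mp ty$, mod $t$ it is $\pm sx$, both nonzero), and the $mn$ grid values are pairwise distinct: $sx-ty=\pm(sx'-ty')$ forces, in the $+$ case $s\mid y-y'$ hence $(x,y)=(x',y')$, and in the $-$ case $t\mid x+x'$ with $2\le x+x'\le t-1$, which is impossible. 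Thus $P\mapsto L_P\,\Delta\,L_{P_0}$ really produces a bar partition, and the ``no part divisible by $s$ (resp.\ $t$)'' condition holds automatically for every image. I would then fix two decompositions: in column $x$ the Yang cells carry the values $sx-t,sx-2t,\dots$ (residue $sx\bmod t$) and the Yin cells above them the values $ty-sx$ (residue $-sx\bmod t$), so column $x$ governs exactly the mod-$t$ pair $\{sx\bmod t,\ t-(sx\bmod t)\}$; as $x$ runs over $1,\dots,n$ these pairs are distinct and exhaust all $(t-1)/2$ pairs. Symmetrically, row $y$ governs the mod-$s$ pair $\{ty\bmod s,\ s-(ty\bmod s)\}$, and the rows exhaust the mod-$s$ pairs.

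Next I would establish well-definedness. Writing $P$ and $P_0$ by their weakly increasing column heights $h(x)$ and $h_0(x)=\fl{sx/t}$, the symmetric difference in column $x$ is precisely the band of cells between the two heights: if $h(x)<h_0(x)$ these are the bottom $h_0(x)-h(x)$ beads of the Yang runner, and if $h(x)>h_0(x)$ the bottom $h(x)-h_0(x)$ beads of the Yin runner. In either case this is a run of beads consecutive from the smallest, lying on a single runner of the pair governed by $x$, never on both. This is exactly the content of the remaining two $\bar t$-core conditions: bottom-closedness of each runner gives ``$a>t\Rightarrow a-t\in\lambda$'', and the one-runner-per-pair constraint gives ``$1\le a\le t-1\Rightarrow t-a\notin\lambda$''; here the monotonicity of $h$ is what guarantees the band is a single consecutive run. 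Running the identical argument by rows, with $g_0(y)=\ce{ty/s}$, verifies the two $\bar s$-core conditions, so every image is a $(\bar s,\bar t)$-core.

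For bijectivity, injectivity is immediate: from $\lambda$ one reads the beads lying on the pair governed by column $x$ (the parts $\equiv\pm sx\pmod t$), and their number and side determine $h(x)$, hence $P$. The substance is surjectivity: given a $(\bar s,\bar t)$-core $\lambda$, define $h(x)$ from its mod-$t$ bead data by the same rule, and check that the resulting sequence is a legal path, i.e.\ $0\le h(x)\le m$ with $h$ weakly increasing, and that $L_P\,\Delta\,L_{P_0}=\lambda$. The last equality is formal once the heights are legal, since every part of $\lambda$ is $\equiv\pm sx\pmod t$ for a unique $x$ and is thus accounted for by some column, provided all parts are grid values.

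The main obstacle is exactly the legality of these recovered heights, and it is the only place the two core conditions must be combined. Bottom-closedness together with ``no multiple of $s$'' already caps the Yang beads of column $x$ at $h_0(x)$ (the next bead up would be the forbidden value $sx$), giving $h(x)\ge0$; but the Yin side is not bounded by the $\bar s$-condition alone, and neither set of conditions by itself forces $h(x)\le m$ or the monotonicity $h(x)\le h(x+1)$, because the $\bar t$-conditions never relate the residues $sx$ and $s(x+1)$, which differ by $s$. I expect to close this by a local adjacency analysis of the cell set $R=L_{P_0}\,\Delta\,\{(x,y):\abs{sx-ty}\in\lambda\}$: showing directly, from the interaction of the mod-$s$ and mod-$t$ conditions, that $R$ is closed under decreasing $y$ and under increasing $x$ (so that $R=L_P$ for a unique monotone $P$) and that no part of $\lambda$ escapes the grid. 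The prototype of the needed contradiction is a ``corner'' conflict: an out-of-range bead produces, via the mod-$s$ descent, a small part whose mod-$t$ partner is then forced simultaneously into and out of $\lambda$. Finally, wherever an independent count of $(\bar s,\bar t)$-cores is available — for instance the value $\binom{(s-1)/2+(t-1)/2}{(s-1)/2}$, which equals the number of monotone paths — injectivity together with equality of these finite cardinalities yields surjectivity at once, bypassing this step entirely.
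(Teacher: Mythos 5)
Your forward direction and injectivity are essentially correct, and they go beyond what the paper itself does (the paper simply cites Bessenrodt and Olsson for this lemma). The arithmetic preliminaries (grid values distinct, never divisible by $s$ or $t$), the column/row decomposition (column $x$ carrying the residue pair $\{sx \bmod t,\ t - (sx\bmod t)\}$, row $y$ the pair modulo $s$), and the observation that $L_P\,\Delta\,L_{P_0}$ meets each column in a run of cells consecutive from the smallest value on a single runner of the pair, all check out; this does yield the three defining conditions of a $\bar{t}$-core (and, by rows, of a $\bar{s}$-core), and the heights are recoverable from the residue data, so the map is injective.

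The genuine gap is surjectivity, and you have not closed it. You correctly identify the obstacle — given a $(\bar{s},\bar{t})$-core $\lambda$, one must show the recovered heights satisfy $0\le h(x)\le m$ and $h(x)\le h(x+1)$, and that no part of $\lambda$ escapes the grid — but what you offer is a plan (``I expect to close this by a local adjacency analysis'') and a prototype of a hoped-for contradiction, not an argument; this is precisely the part where the mod-$s$ and mod-$t$ conditions must interact nontrivially, and it is the substance of the Bessenrodt--Olsson proof. Your proposed bypass is circular in the context of this paper: the count $\binom{m+n}{n}$ of $(\bar{s},\bar{t})$-cores invoked in the proof of Theorem~\ref{mainthm} comes from Bessenrodt and Olsson, who obtain it \emph{from} this very bijection, so it cannot be used as an ``independent count'' to deduce surjectivity from injectivity unless you supply a derivation of that count that does not pass through the lemma — which you do not. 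As it stands, your argument proves there is an injection from monotone paths into $(\bar{s},\bar{t})$-cores, but not that it is a bijection.
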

\begin{proof}
    See~\cite{Bessenrodt20063}.
\end{proof}
\begin{definition}
For $S$ a subset of the positive integers, let $E(S) = S\intersect2\BZ$ (i.e.
the subset consisting of all of the even integers of $S$.) Define the
\textit{parity} of a path $P$ to be the parity of $E(L_P)$ (i.e. a path is even if $E(L_P)$ is even and odd if $E(L_P)$ is odd.)
\end{definition}
\begin{lemma}
    Even $(\bar{s}, \bar{t})$-core partitions correspond exactly to paths $P$ that have the same parity as $P_0$.
\end{lemma}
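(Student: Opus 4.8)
The plan is to reduce the statement to a short set-theoretic computation, the crux being that the operation $E$ commutes with taking symmetric differences. First I would record the purely formal fact that for any subsets $A, B$ of the positive integers,
\[
    E(A\ \Delta\ B) = E(A)\ \Delta\ E(B).
\]
This holds because $E(S) = S\intersect 2\BZ$ and intersection with a fixed set distributes over symmetric difference, i.e. $(A\ \Delta\ B)\intersect C = (A\intersect C)\ \Delta\ (B\intersect C)$; taking $C = 2\BZ$ gives the identity. Applying it to the partition $\lambda = L_P\ \Delta\ L_{P_0}$ attached to $P$ by \eqref{eq:pathtocore} yields $E(\lambda) = E(L_P)\ \Delta\ E(L_{P_0})$.

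Next I would pass to cardinalities modulo $2$. For any finite sets $\abs{A\ \Delta\ B} = \abs{A} + \abs{B} - 2\abs{A\intersect B}$, so $\abs{A\ \Delta\ B}\equiv\abs{A} + \abs{B}\pmod 2$, and therefore
\[
    \abs{E(\lambda)}\equiv\abs{E(L_P)} + \abs{E(L_{P_0})}\pmod 2.
\]
Now recall that $\lambda$ is even precisely when it has an even number of even parts, that is, when $\abs{E(\lambda)}$ is even. By the congruence above, $\abs{E(\lambda)}$ is even if and only if $\abs{E(L_P)}$ and $\abs{E(L_{P_0})}$ have the same parity, which is exactly the condition that $P$ and $P_0$ have the same parity in the sense of the preceding definition. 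This establishes the claimed correspondence.

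The argument is essentially routine, and I do not expect a genuine obstacle; the only point demanding care is the bookkeeping identification under which $L_P$ and $L_{P_0}$ are regarded as \emph{sets of positive integers} — namely the distinct values $\abs{sx - ty}$ filling their cells — so that the symmetric difference in \eqref{eq:pathtocore} literally produces the set of parts of $\lambda$, and so that $E$, defined on subsets of $\BZ$, may be applied and distributed over $\Delta$ as above. Once that identification is made explicit, the three displayed steps combine to give the lemma directly.
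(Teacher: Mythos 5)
Your proposal is correct and follows essentially the same route as the paper: both reduce the lemma to the identity $E(L_P\ \Delta\ L_{P_0}) = E(L_P)\ \Delta\ E(L_{P_0})$ followed by the parity computation $\abs{A\ \Delta\ B} = \abs{A} + \abs{B} - 2\abs{A\intersect B}$. Your version merely makes explicit two points the paper leaves implicit — the distributivity of intersection over symmetric difference and the identification of $L_P$, $L_{P_0}$ with sets of distinct entries — which is fine but not a different argument.
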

\begin{proof}
    We have
    \begin{align*}
        \abs{E(L_P \Delta L_{P_0})} &= \abs{E(L_P) \Delta E(L_{P_0})} \\ 
        &= \abs{E(L_P)} + \abs{E(L_{P_0})} - 2\abs{E(L_P)\intersect E(L_{P_0})}
    \end{align*}
    so $\abs{E(L_P \Delta L_{P_0})}$ is even if and only if $\abs{E(L_P)}\equiv \abs{E(L_{P_0})}\pmod{2}$, as desired.
\end{proof}
\begin{lemma}
\label{jacobilemma}
    Suppose $s \equiv 1\pmod{4}$. Then ${(-1)}^{\abs{E(L_{P_0})}} = \left( \frac{s}{t} \right)$, where $\jacobi{s}{t}$ denotes the Jacobi symbol.
\end{lemma}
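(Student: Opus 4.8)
The plan is to translate $\abs{E(L_{P_0})}$ into a lattice-point count in the Yin--Yang grid and to read off its parity as an Eisenstein-type sum for the Jacobi symbol. Recall that $L_{P_0}$ is the Yang region, consisting of the cells $(x,y)$ with $1\le x\le n$, $1\le y\le m$ and $sx-ty>0$, and that the value recorded in such a cell is $sx-ty$. Since $s$ and $t$ are odd, $sx-ty\equiv x+y\pmod 2$, so a Yang cell is even exactly when $x\equiv y\pmod 2$. Writing $k_x=\fl{sx/t}$, one checks that $k_x\le m$ for $1\le x\le n$, so the Yang cells in column $x$ are precisely those with $1\le y\le k_x$. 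Hence $\abs{E(L_{P_0})}=N$, where $N$ counts Yang cells with $x\equiv y\pmod 2$, and the total number of Yang cells is $M=\sum_{x=1}^{(t-1)/2}k_x$.

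First I would invoke the generalization of Eisenstein's lattice-point formula (Gauss's lemma) to Jacobi symbols, which gives $(-1)^M=\jacobi{s}{t}$; this is where the Jacobi symbol enters. It then suffices to prove $N\equiv M\pmod 2$, equivalently that the number $N'=M-N$ of odd-valued Yang cells is even. This reduction is the heart of the lemma, and it is the only place the hypothesis $s\equiv 1\pmod 4$ --- equivalently $m=(s-1)/2$ even --- is used.

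To attack $N'$, I would reflect each column about its midpoint, $y\mapsto k_x+1-y$. When $k_x$ is even this is a fixed-point-free involution of the column that pairs each even-valued cell with an odd-valued one, so such columns contribute equally to $N$ and to $N'$; the discrepancy between $N$ and $N'$ therefore comes only from the middle cells of the odd-height columns. Carrying this out yields the explicit parity
\[
    N'\equiv\sum_{x=1}^{(t-1)/2}\fl{\frac{sx}{2t}}+\#\left\{x\le\tfrac{t-1}{2}:x\text{ even},\ k_x\text{ odd}\right\}\pmod 2,
\]
and the task becomes showing that this expression vanishes modulo $2$ precisely when $m$ is even.

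The main obstacle is exactly this residual parity computation. The sum $\sum_x\fl{sx/(2t)}$ is an Eisenstein sum with the halved modulus $2t$ taken over a quarter-range $1\le x\le (t-1)/2$, so the standard folding $x\mapsto t-x$ (or $x\mapsto 2t-x$) leaves the range and cannot be applied directly; this is the same phenomenon that produces second-supplement $\jacobi{2}{t}$ contributions in lattice-point proofs of quadratic reciprocity. I would evaluate it by separating $x$ into even and odd classes: the even part $x=2w$ collapses to the genuine Eisenstein sum $\sum_{w\le\fl{t/4}}\fl{sw/t}$, after which I would use the evenness of $m$ to match the remaining terms against the correction count, reducing everything to reciprocity together with the value of $\jacobi{2}{t}$. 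I expect the careful bookkeeping of the floors and of the parities of the relevant endpoints to be the delicate part, with $s\equiv 1\pmod 4$ being what forces the leftover terms to cancel in pairs.
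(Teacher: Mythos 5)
Your setup is correct and, up to the point where you stop, parallels the paper's: you identify that a Yang cell $(x,y)$ carries an even entry exactly when $x\equiv y\pmod 2$, and you invoke the same external ingredient the paper uses, namely the Eisenstein-type lattice formula $\jacobi{s}{t}=(-1)^{M}$ with $M=\sum_{x=1}^{(t-1)/2}\fl{sx/t}$. Your reduction of the lemma to the congruence $N\equiv M\pmod 2$ (equivalently, that the number $N'$ of odd-valued Yang cells is even) is valid, and your formula
$N'=\sum_{x=1}^{(t-1)/2}\fl{\frac{sx}{2t}}+\#\{x\le\tfrac{t-1}{2}: x\ \text{even},\ \fl{sx/t}\ \text{odd}\}$
is in fact an exact identity, not just a congruence.

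However, the proof stops precisely where the content of the lemma lies. Establishing $N'\equiv 0\pmod 2$ when $s\equiv 1\pmod 4$ is the whole point, and your plan for it --- collapse the even-index part to $\sum_{w\le\fl{t/4}}\fl{sw/t}$, then ``match the remaining terms against the correction count'' via reciprocity and $\jacobi{2}{t}$ --- is never carried out, and it is doubtful it goes through as described: the odd-index part $\sum_{x\ \mathrm{odd}}\fl{sx/(2t)}$ is a quarter-range sum with modulus $2t$ that does not fold under $x\mapsto t-x$, which is exactly the obstruction you flag yourself. The paper closes this gap with a different, cleaner device. It writes the per-column even-cell count uniformly as $c_j=\fl{\tfrac{(s/t+1)j}{2}}-\fl{\tfrac{j}{2}}$ and proves the reflection identity
\[
\Bigl(\fl{\tfrac{(s/t+1)j}{2}}-\fl{\tfrac{j}{2}}\Bigr)+\Bigl(\fl{\tfrac{(s/t+1)(t-j)}{2}}-\fl{\tfrac{t-j}{2}}\Bigr)=\tfrac{s-1}{2},
\]
whose right side is even precisely because $s\equiv 1\pmod 4$; hence each odd index $j\le\tfrac{t-1}{2}$ can be swapped for the even index $t-j$ without changing the parity of the total, turning $N=\sum_j c_j$ directly into $\sum_{k=1}^{(t-1)/2}\fl{sk/t}=M$ modulo $2$ --- no appeal to reciprocity or to $\jacobi{2}{t}$ is needed. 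To finish along your own lines you would need an analogue of this pairing for your two leftover quantities; as written, your argument is a correct reduction plus an unproven (though true) claim, so it does not yet constitute a proof.
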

\begin{proof}
    Let $c_j$ be the number of even integers in the $j$th column of the Yang half of the diagram. Once again, we will associate each square in the Yin-Yang diagram with the coordinates of its upper-right corner. Then for each $j$, the squares in the Yang half of the diagram with $x = j$ corrspond to those satisfying $1\le y \le \fl{\frac{sx}{t}}$. If $j$ is odd, then the even squares correspond to squres whose $y$ coordinate is odd, so
    \[
        c_j = \fl{\frac{\fl{\frac{sj}{t}} + 1}{2}} = \fl{\frac{\frac{sj}{t} + 1}{2}}.
    \]
    On the other hand, if $j$ is even, then the even squares in the $j$th column correpond to squares whose $y$ coordinate is even, so in this case,
    \[
        c_j = \fl{\frac{\fl{\frac{sj}{t}}}{2}} = \fl{\frac{\frac{sj}{t}}{2}}.
    \]
    In either case, the number of even numbers in the $j$th column is
    \[
        c_j = \fl{\frac{\frac{sj}{t} + j}{2}} - \fl{\frac{j}{2}} = \fl{\frac{\left( \frac{s}{t} + 1 \right)j}{2}} - \fl{\frac{j}{2}}.
    \]
    Thus
    \begin{equation}
        \label{eq:esum}
        \abs{E(L_{P_0})} = \dsum_{j=1}^{\frac{t-1}{2}} c_j = \dsum_{j = 1}^{\frac{t - 1}{2}}\fl{\frac{\left( \frac{s}{t} + 1 \right)j}{2}} - \fl{\frac{j}{2}}.
    \end{equation}
    Since $t$ is odd, we have
    \begin{equation}
        \fl{\frac{j}{2}} + \fl{\frac{t - j}{2}} = \frac{t - 1}{2},
    \end{equation}
    and for all $j$ not divisibly by $t$, we have.
    \begin{equation}
        \fl{\frac{\left( \frac{s}{t} + 1 \right)j}{2}} + \fl{\frac{\left( \frac{s}{t} + 1 \right)(t - j)}{2}} = \frac{s + t}{2} - 1.
    \end{equation}
    Thus
    \begin{equation}
        \label{eq:fldiffev}
        \fl{\frac{\left( \frac{s}{t} + 1 \right)j}{2}} - \fl{\frac{j}{2}} + \fl{\frac{\left( \frac{s}{t} + 1 \right)(t - j)}{2}} - \fl{\frac{t - j}{2}} = \frac{s - 1}{2}.
    \end{equation}
    Since $s\equiv 1\pmod{4}$, this means that the right hand side of \eqref{eq:fldiffev} is even, which means that in \eqref{eq:esum}, we can replace the $j = 1,2,\ldots,\frac{t-1}{2}$ with $j = 2, 4, \ldots, t - 1$. Indeed, for every $1\le j\le \frac{t - 1}{2}$, we can replace $j$ with the unique even residue in $\{j, t - j\}$ without changing the parity of the summation. Substituting this back into $\eqref{eq:esum}$ gives
    \begin{equation}
        E(s, t) \equiv \dsum_{k = 1}^{\frac{t - 1}{2}}\fl{\frac{\left( \frac{s}{t} + 1 \right)(2k)}{2}} - \fl{\frac{(2k)}{2}} \equiv \dsum_{k = 1}^{\frac{t - 1}{2}}\fl{\frac{sk}{t}}  \pmod{2}.
    \end{equation}
    However,~\cite{2000} showed that for odd integers $s, t$, 
    \begin{equation}
        \jacobi{s}{t} = (-1)^{\displaystyle\sum_{i=1}^{(t-1)/2}\fl{\frac{is}{t}}}.
    \end{equation}
    Thus $(-1)^{E(s,t)} = \jacobi{s}{t}$, as desired.
\end{proof}

\section{Counting Even and Odd Paths}
In this section, suppose $x, y$ are arbitrary positive integers. For each unit
square in the plane, color the square with upper right corner $(i,j)$ red if
$i+j$ is even; otherwise, color the square blue. In this case, define a path from $(0,0)$ to $(x,y)$ to be \textit{even} if the number of red squares between the path and the $x$-axis is even and odd otherwise. Note that in the case of the Yin-Yang diagram, red squares correspond to squares that contain an even integer, so the definitions agree.
\begin{lemma}
    \label{pathlemma}
    Let $D(x, y)$ be the number of even paths from $(0,0)$ to $(x, y)$ minus the number of odd paths from $(0,0)$ to $(x,y)$.
    \begin{itemize}
        \item If $x = 2k, y = 2l$, then $D(x, y) = \binom{k + l}{k}$.
        \item If $x = 2k, y = 2l + 1$, then $D(x, y) = \binom{k + l}{k}$.
        \item If $x = 2k + 1, y = 2l$, then $D(x, y) = (-1)^l\binom{k + l}{k}$.
        \item If $x = 2k + 1, y = 2l + 1$, then $D(x, y) = 0$.
    \end{itemize}
\end{lemma}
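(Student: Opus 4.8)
The plan is to reduce the signed count $D(x,y)$ to a one-dimensional recursion on the number of columns and then evaluate that recursion by induction on $x$. First I would parametrize each monotone path $P$ from $(0,0)$ to $(x,y)$ by the heights $0 \le h_1 \le \cdots \le h_x \le y$ of its successive horizontal segments, where $h_i$ is the height of $P$ over the $i$th column (the vertical strip between $x = i-1$ and $x = i$); this is a standard bijection between monotone paths and weakly increasing height sequences. The squares lying below $P$ in the $i$th column are exactly those with upper right corner $(i,j)$ for $1 \le j \le h_i$, and such a square is red precisely when $j \equiv i \pmod 2$. Hence the number of red squares below $P$ in that column is $\ce{h_i/2}$ when $i$ is odd and $\fl{h_i/2}$ when $i$ is even, and the sign $(-1)^{R(P)}$ factors as a product $\dprod_{i=1}^x w_i(h_i)$ of per-column weights, where $w_i(h) = (-1)^{\ce{h/2}}$ for $i$ odd and $w_i(h) = (-1)^{\fl{h/2}}$ for $i$ even.

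Summing over the height $h_x = h$ of the last column and noting that the remaining heights $0 \le h_1 \le \cdots \le h_{x-1} \le h$ parametrize exactly the monotone paths into $(x-1,h)$ with unchanged weights yields the recursion
\[
    D(x, y) = \dsum_{h=0}^{y} w_x(h)\, D(x-1, h), \qquad D(0, y) = 1,
\]
the base value $D(0,y)=1$ reflecting that the unique path into $(0,y)$ has no columns below it. I would then prove all four formulas simultaneously by induction on $x$, with the base case $x=0$ (the even-$x$ case with $k=0$) immediate.

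For the inductive step with $x = 2k$ even, the inductive hypothesis makes $D(2k-1,h)$ vanish for odd $h$ and equal $(-1)^l\binom{k-1+l}{k-1}$ for $h=2l$; since $w_{2k}(2l) = (-1)^l$, the signs cancel and
\[
    D(2k, y) = \dsum_{l=0}^{\fl{y/2}} \binom{k-1+l}{k-1} = \binom{k + \fl{y/2}}{k}
\]
by the hockey-stick identity, which equals $\binom{k+l}{k}$ for both $y = 2l$ and $y = 2l+1$. For the step with $x = 2k+1$ odd, the inductive hypothesis gives $D(2k,h) = \binom{k+\fl{h/2}}{k}$ for all $h$, while $w_{2k+1}(h) = (-1)^{\ce{h/2}}$; pairing the terms $h = 2l$ and $h = 2l+1$, which carry equal binomials $\binom{k+l}{k}$ but opposite signs $(-1)^l$ and $(-1)^{l+1}$, each pair cancels. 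Thus $D(2k+1,y) = 0$ when $y$ is odd, while for $y = 2l$ a single unpaired term $h = 2l$ survives and gives $(-1)^l\binom{k+l}{k}$; these are exactly the two odd-$x$ formulas.

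The conceptual heart of the argument, and the step most prone to error, is the first one: verifying that $(-1)^{R(P)}$ factors cleanly into the per-column weights with the correct floor/ceiling behavior dictated by the parity of the column index, and confirming that the height parametrization is a genuine bijection so that the last-column sum really reproduces $D(x-1,h)$. Once that bookkeeping is pinned down, the two evaluations are routine---one is the hockey-stick identity and the other a termwise sign cancellation---so I expect no further difficulty.
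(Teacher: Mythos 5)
Your proof is correct, but it takes a genuinely different route from the paper's. The paper peels off the last \emph{step} of the path: it inducts on $\min(x,y)$ via the two-term recursion $D(x,y) = D(x,y-1) + (-1)^{c_x(y)}D(x-1,y)$, where $c_x(y)$ counts the red squares in the full final column $[x-1,x]\times[0,y]$, and then verifies all four parity cases by direct computation (essentially a signed Pascal recursion). You instead peel off the entire last \emph{column}: you factor the sign multiplicatively, $(-1)^{R(P)} = \prod_{i=1}^{x} w_i(h_i)$, which turns $D$ into a transfer-type sum $D(x,y) = \sum_{h\le y} w_x(h)\,D(x-1,h)$, and you evaluate it by induction on $x$ alone, using the hockey-stick identity when $x$ is even and termwise pairing when $x$ is odd. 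The trade-off: your factorization makes the structure of the answer more transparent---in particular, the vanishing in the odd-odd case appears as an exact cancellation of the pairs $(h,h+1)$ rather than as the algebraic coincidence of two binomials with opposite signs---whereas the paper's step-recursion needs less setup (no height parametrization) and its case check is purely mechanical. The steps you flag as delicate all check out: the height parametrization $0\le h_1\le\cdots\le h_x\le y$ is the standard bijection, the per-column red counts are $\ce{h/2}$ for odd columns and $\fl{h/2}$ for even columns exactly as you state, and since the weights $w_i$ depend only on the column index $i$, the inner sum over $h_1\le\cdots\le h_{x-1}\le h$ is indeed $D(x-1,h)$, so the recursion and both evaluations are sound.
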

\begin{proof}
    We induct on $\min(x, y)$. Clearly $D(x, y) = 1$ if $x = 0$ or $y = 0$, and this agrees with the formulas above.
    
    Otherwise, we have $D(x, y) = D(x, y - 1) + (-1)^{c_x(y)}D(x - 1, y)$,
    where $c_x(y)$ is the number of red squares in the rectangle $[x-1,
    x]\times[0,y]$. As before, we have that $c_x(y) = \fl{\frac{x + y}{2}} -
    \fl{\frac{x}{y}}$. Note that $c_x(y) = \fl{\frac{y}{2}}$ unless $x$ and $y$
    are both odd, in which case it is equal to $\fl{\frac{y}{2}} + 1$. There
    are four cases for the parities of $x$ and $y$:
    \begin{itemize}
        \item $x = 2k, y = 2l$.
            \begin{align*}
                D(x,y) &= D(2k, 2l - 1) + (-1)^{l} D(2k - 1, 2l)\\
                &= \binom{k + l - 1}{l - 1} + (-1)^{l}\cdot (-1)^l\binom{(k - 1) + l}{l}\\
                &= \binom{k + l}{l}
            \end{align*}
        \item $x = 2k, y = 2l + 1$.
            \begin{align*}
                D(x,y) &= D(2k, 2l) + (-1)^{l} D(2k - 1, 2l + 1)\\
                &= \binom{k + l}{l} + 0\\
                &= \binom{k + 1}{l}
            \end{align*}
        \item $x = 2k + 1, y = 2l$.
            \begin{align*}
                D(x,y) &= D(2k + 1, 2l - 1) + (-1)^{l} D(2k, 2l)\\
                &= 0 + (-1)^{l}\binom{k + l}{l}\\
                &= (-1)^{l}\binom{k + 1}{l}
            \end{align*}
        \item $x = 2k + 1, y = 2l + 1$.
            \begin{align*}
                D(x,y) &= D(2k + 1, 2l) + (-1)^{k + l + 1 - k} D(2k, 2l + 1)\\
                &= (-1)^l \binom{k + l}{l} + (-1)^{l + 1}\binom{k + l}{l}\\
                &= 0
            \end{align*}
    \end{itemize}
\end{proof}
In particular, we have that if $y$ is even, then $D(x, y) = (-1)^{xy/2}\binom{k + l}{l}$.
\section{Even $(\bar{s},\bar{t})$-cores}
We are now ready to enumerate the even $(\bar{s}, \bar{t})$-core partitions.
\begin{theorem}
    \label{mainthm}
    Let $s, t > 1$ be relatively prime odd integers, and set $m = \fl{s / 2}, n = \fl{t / 2}, a = \fl{s / 4}, b = \fl{t / 4}$. In addition, let $\displaystyle\jacobi{*}{*}$ denote the Jacobi symbol. Then the number of even $(\bar{s}, \bar{t})$-core partitions is
    \begin{itemize}
        \item $\frac{1}{2}\binom{m + n}{n}$ if $s, t\equiv 3(4)$,
        \item $\frac{1}{2}\left( \binom{m + n}{n} + (-1)^{mn/2} \left(\frac{s}{t}\right)\binom{a + b}{b}\right)$ otherwise.
    \end{itemize}
    When $s$ and $t$ are primes, this gives the number of self-associate spin characters in $\tilde{S}_n$ that are defect 0 for both $s$ and $t$.
\end{theorem}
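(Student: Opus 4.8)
The plan is to convert the enumeration into the signed path count of Lemma~\ref{pathlemma}. By the bijection of~\eqref{eq:pathtocore} together with the parity lemma, the even $(\bar s,\bar t)$-cores correspond exactly to the monotone lattice paths $P$ from $(0,0)$ to $(n,m)$ whose parity equals that of the Yin--Yang separator $P_0$. Writing $N_+$ and $N_-$ for the number of even, respectively odd, such paths, I have $N_+ + N_- = \binom{m+n}{n}$ (the total number of monotone paths, which is the Bessenrodt--Olsson count of all $(\bar s,\bar t)$-cores) and, by the very definition of $D$, the identity $N_+ - N_- = D(n,m)$. Since the even cores are counted by $N_+$ when $P_0$ is even and by $N_-$ when $P_0$ is odd, in both cases their number is
\[
\tfrac12\left(\binom{m+n}{n} + (-1)^{\abs{E(L_{P_0})}}\, D(n,m)\right).
\]
Everything now reduces to evaluating $D(n,m)$ and the sign $(-1)^{\abs{E(L_{P_0})}}$.

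For $D(n,m)$ I would read off Lemma~\ref{pathlemma} with $x=n$, $y=m$, after recording the residue dictionary: $m=\fl{s/2}$ is even exactly when $s\equiv1\pmod4$ (and then $m=2a$), and odd exactly when $s\equiv3\pmod4$ (and then $m=2a+1$), with the analogous statements for $n$, $t$, and $b$. When $s,t\equiv3\pmod4$ both $m$ and $n$ are odd, so Lemma~\ref{pathlemma} gives $D(n,m)=0$; the formula collapses to $\tfrac12\binom{m+n}{n}$, which is the first bullet, and the parity of $P_0$ is immaterial. In every remaining case at least one of $m,n$ is even.

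To evaluate the sign I would invoke Lemma~\ref{jacobilemma}. Since the set of even $(\bar s,\bar t)$-cores is manifestly unchanged under interchanging $s$ and $t$, and since ``otherwise'' forces at least one of $s,t$ to be $\equiv1\pmod4$, I may assume without loss of generality that $s\equiv1\pmod4$. Then $m=2a$ is even, the remark following Lemma~\ref{pathlemma} gives $D(n,m)=(-1)^{mn/2}\binom{a+b}{b}$, and Lemma~\ref{jacobilemma} gives $(-1)^{\abs{E(L_{P_0})}}=\jacobi{s}{t}$; substituting into the displayed expression yields exactly $\tfrac12\bigl(\binom{m+n}{n}+(-1)^{mn/2}\jacobi{s}{t}\binom{a+b}{b}\bigr)$. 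The one delicate point is legitimising the reduction to $s\equiv1\pmod4$: if instead $t\equiv1$ had been chosen, the argument would naturally produce $\jacobi{t}{s}$ in place of $\jacobi{s}{t}$, so I must check the two are equal. This is where Jacobi reciprocity enters, $\jacobi{s}{t}\jacobi{t}{s}=(-1)^{\frac{s-1}{2}\frac{t-1}{2}}=(-1)^{mn}$; because in the ``otherwise'' case one of $m,n$ is even, $mn$ is even and hence $\jacobi{s}{t}=\jacobi{t}{s}$, so the stated formula is symmetric and the choice of ordering is irrelevant. This reciprocity reconciliation is the only genuinely nonroutine step; everything else is bookkeeping modulo $4$.

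Finally, the closing assertion about characters is immediate from the Preliminaries: when $s=p$ and $t=q$ are primes, the cited correspondences identify the self-associate spin characters of $\tilde S_n$ of defect $0$ for both $p$ and $q$ with the even $(\bar p,\bar q)$-core partitions, so the count established above transfers verbatim.
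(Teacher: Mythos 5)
Your proposal is correct and follows essentially the same route as the paper's own proof: the signed count $\frac12\bigl(\binom{m+n}{n} + (-1)^{\abs{E(L_{P_0})}}D(n,m)\bigr)$, the same case split on residues mod $4$, and the same appeals to Lemma~\ref{pathlemma}, Lemma~\ref{jacobilemma}, and quadratic reciprocity to justify the reduction to $s\equiv1\pmod4$. Your only departure is that you spell out the reciprocity step ($\jacobi{s}{t}\jacobi{t}{s}=(-1)^{mn}=1$ in the relevant case) in more detail than the paper, which simply asserts the symmetry.
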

\begin{proof}
    The number of even paths in the $(s, t)$ Yin-Yang diagram minus the number of odd paths is equal to $D(n, m)$ (as given in Lemma $\ref{pathlemma}$). Thus the number of even $(\bar{s}, \bar{t})$-core partitions minus the number of odd $(\bar{s}, \bar{t})$-core partitions is equal to $(-1)^{\abs{E(L_{P_0})}} D(n, m)$.
Since the overall number of $(\bar{s}, \bar{t})$-core partitions is $\binom{m+n}{n}$, this means that the number of even $(\bar{s}, \bar{t})$-core partitions is
    \[
        \frac{1}{2}\left( \binom{m + n}{n} + (-1)^{\abs{E(L_{P_0})}} \cdot D(n, m) \right).
    \]
    If $s,t$ are both $3\pmod{4}$, then $m$ and $n$ are both odd. Thus $D(n, m) = 0$, so the number of even $(\bar{s}, \bar{t})$-core partitions is 
    \begin{equation}
        \label{eq:final-ans1}
        \frac{1}{2}\binom{m + n}{n}.
    \end{equation}
    Otherwise, suppose without loss of generality that $s\equiv 1\pmod4$.
    Then $m$ is even, so by Lemma~\ref{pathlemma},
    \[
        D(n, m) = (-1)^{mn/2} \binom{\fl{m/2} + \fl{n/2}}{\fl{n/2}} = (-1)^{mn/2} \binom{a + b}{b}.
    \]
    In addition, by Lemma~\ref{jacobilemma},
    \[
        (-1)^{\abs{E(L_{P_0})}} = \jacobi{s}{t}.
    \]
    Thus the number of even $(\bar{s},\bar{t})$-core partitions in this case is
    \begin{equation}
        \label{eq:final-ans2}
        \frac{1}{2}\left( \binom{m + n}{n} + (-1)^{mn/2} \left(\frac{s}{t}\right)\binom{a + b}{b}\right).
    \end{equation}
    Note that by quadratic reciprocity, this formula is symmetric in $s$ and
    $t$, so \eqref{eq:final-ans2} does in fact give the number of even
    $(\bar{s}, \bar{t})$-core partitions as long as $s$ and $t$ are not both $3\pmod{4}$.
\end{proof}
\section{Future Direction}
Bessenrodt and Olsson~\cite{Bessenrodt20063} showed that if $p < q$ are odd primes, then the Yin half of the $(p, q)$ Yin-Yang diagram is in some sense the ``largest'' $(\bar{p}, \bar{q})$-core partition, and thus the maximum $n$ for which there exists an associate class of spin characters in $\tilde{S}_n$ with defect 0 for $p$ and $q$ is just the sum of the numbers in the Yin diagram. More precisely, they showed that any $(\bar{p}, \bar{q})$-core can be contained in the partition represented by the Yin half of the diagram. We can ask the same question for even $(\bar{p}, \bar{q})$-core partitions. 
\begin{conjecture}
    For any pair of distinct odd primes $p, q$, there exists a $(\bar{p}, \bar{q})$-core partition $\lambda$ such that any even $(\bar{p}, \bar{q})$-core partition is contained in $\lambda$.
\end{conjecture}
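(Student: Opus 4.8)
The plan is to recast the conjecture as the assertion that the containment poset of even $(\bar p,\bar q)$-core partitions has a maximum, and to locate that maximum inside the Yin half $Y$.

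First I would invoke Bessenrodt and Olsson~\cite{Bessenrodt20063}: the full poset of $(\bar p,\bar q)$-cores already has a maximum, namely $Y$, the core attached to the path $P_{\max}$ that runs up the left edge and then across the top, so that $L_{P_{\max}}$ is the entire grid and $Y = L_{P_{\max}}\,\Delta\,L_{P_0}$. The first genuine step is to decide the parity of $Y$. Since $Y$ is an even partition precisely when $\abs{E(Y)} = \abs{E(L_{P_{\max}}\,\Delta\,L_{P_0})}$ is even, this is exactly the parity computed, column by column, in Lemma~\ref{jacobilemma}, and it is again governed by the residues of $p$ and $q$ modulo $4$ together with the Jacobi symbol $\jacobi{p}{q}$. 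If $Y$ is even we are finished immediately, taking $\lambda = Y$: it contains every $(\bar p,\bar q)$-core, hence every even one. The entire problem therefore lives in the case that $Y$ is \emph{odd}.

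In that case the natural candidate is the join of all even cores. On strict partitions, containment is the shifted-shape order, and the componentwise join $\lambda\vee\mu := (\max(\lambda_i,\mu_i))_i$ is again a strict partition. So it suffices to prove two closure statements: that the join of two $(\bar p,\bar q)$-cores is again a $(\bar p,\bar q)$-core, and that the join of two \emph{even} cores is again even. Granting both, the join of all even cores (there is at least one, the empty partition) is itself an even $(\bar p,\bar q)$-core and, by construction, contains every even core; this join is the desired $\lambda$. The first closure statement is essentially the lattice property underlying the Bessenrodt--Olsson maximum, and I expect it to be provable directly from the three combinatorial constraints defining a $\bar p$-core in the Preliminaries. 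The parity-preservation statement is the real obstacle.

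I expect the difficulty to be that parity preservation cannot be formal. Writing the parity of a core as the number of red (even-valued) cells in $L_P\,\Delta\,L_{P_0}$ modulo $2$ realizes the even cores as a single parity class of a $\BZ/2$-valued counting function on a distributive lattice, and for an abstract such function the join of two even elements need not be even --- already a three-element antichain of marked elements has three pairwise-incomparable maximal even ideals and no maximum. Thus the argument must exploit the specific staircase geometry of the Yin--Yang diagram rather than lattice theory alone. Concretely, I would analyse the removable red cells of $Y$ --- the even parts whose deletion again yields a $(\bar p,\bar q)$-core --- near the diagonal $px = qy$ where the Yin and Yang regions meet, and show, using $\gcd(p,q)=1$, that they are arranged along a single chain, so that the parity of $Y$ can always be corrected by one canonical deletion that still dominates every even core. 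Establishing that such a dominating removable even cell always exists, and that competing incomparable maximal even cores therefore cannot occur, is the crux on which the conjecture rests.
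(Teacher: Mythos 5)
First, a point of comparison that matters for how your proposal should be judged: the paper does not prove this statement at all. It is posed as an open conjecture in the ``Future Direction'' section, so there is no proof of record to measure you against, and your argument must stand on its own. Before assessing the gap, note also that as \emph{literally} stated the conjecture is vacuous given the very result you invoke: Bessenrodt and Olsson's theorem (cited in the same section of the paper) provides a $(\bar{p},\bar{q})$-core $Y$, the Yin half, that contains \emph{every} $(\bar{p},\bar{q})$-core, hence in particular every even one, and the statement does not require $\lambda$ itself to be even --- so $\lambda = Y$ works unconditionally. The substantive question, which you implicitly (and I think correctly) substitute for the literal one, is whether the poset of \emph{even} cores under containment has a maximum, i.e.\ whether $\lambda$ can be taken to be even. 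You should state this strengthening explicitly, since it is the only reading under which there is anything to prove.

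Under that reading your proposal contains two unproven steps, and you concede the decisive one. First, the claim that the componentwise join of two $(\bar{s},\bar{t})$-cores is again a $(\bar{s},\bar{t})$-core is asserted to be ``essentially the lattice property underlying the Bessenrodt--Olsson maximum,'' but their result gives only the existence of a maximum element, not closure of the core poset under joins; that closure would need its own proof from the three defining constraints and is not obvious. Second, and fatally, the parity step --- that the join of two even cores is even, or in your fallback formulation that $Y$ always admits a canonical removable red cell whose deletion yields an even core dominating all even cores --- is not an auxiliary lemma: it \emph{is} the conjecture. Your own three-element-antichain observation shows that no abstract lattice-theoretic argument can supply it, so the entire weight of the proof rests on the ``crux'' you explicitly leave open. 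A reduction of a conjecture to an unproven claim of equal (or greater) strength is not a proof. What you do have of value is the correct reformulation (maximum of the even-core poset), the correct disposal of the trivial case where $Y$ is even --- whose occurrence one could in fact characterize using Lemma~\ref{jacobilemma} together with quadratic reciprocity --- and an accurate identification of where the difficulty lies, which is precisely where the paper's author also left it.
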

Regardless of whether the conjecture is true, we can also ask the following question: 
\begin{question}
    What is the maximum $n$ for which a self-associate character in
    $\tilde{S}_n$ of defect 0 for $p$ and $q$ exists?
\end{question}
\section*{Acknowledgements}
    This research was conducted at the University of Minnesota Duluth REU and was supported by NSF grant 1358659 and NSA grant H98230-13-1-0273. The author thanks Rishi Nath for suggesting the problem, Joe Gallian for supervision to research, and Ben Gunby for helpful comments on the manuscript.
\bibliographystyle{plain}
\bibliography{writeup}
\end{document}